\documentclass{amsart}
\usepackage{amscd,graphicx}




\newtheorem{theorem}[equation]{Theorem}
\newtheorem{lemma}[equation]{Lemma}
\newtheorem{proposition}[equation]{Proposition}
\newtheorem{corollary}[equation]{Corollary}
\theoremstyle{definition}
\newtheorem{definition}[equation]{Definition}

\newtheorem{example}[equation]{Example}
\newtheorem*{remark}{Remark}
\DeclareMathOperator{\coker}{coker}

\DeclareMathOperator{\st}{st}
\DeclareMathOperator{\rk}{rank}
\newcommand{\Z}{\mathbf{Z}}

\newcommand{\KK}{\mathbf{K}}
\begin{document}

\title[Topological Koszulity]{The Koszul property as a Topological Invariant and Measure of Singularities.}
\author[Sadofsky, Shelton]{Hal Sadofsky and Brad Shelton}
\address{University of Oregon\\
Eugene Oregon}
\email{sadofsky@uoregon.edu, shelton@uoregon.edu}
\keywords{Koszul Algebras, singularities}
\subjclass[2000]{16S37, 58K65}
\begin{abstract}
Cassidy, Phan and Shelton associate to any regular cell complex $X$ a
quadratic $K$-algebra $R(X)$.  They give a combinatorial solution to
the question of when this algebra is Koszul.  The algebra $R(X)$ is a
combinatorial invariant but not a topological invariant.  We show that
nevertheless, the property that $R
(x)$ be Koszul is a topological invariant.

In the process we establish some conditions on the types of local
singularities that can occur in cell complexes $X$ such that $R
(X)$ is Koszul, and more generally in cell complexes that are pure and
connected by codimension one faces.
\end{abstract}
\date{November 4, 2009}

\maketitle

\section{Introduction}

Let $X$ be a finite {\it regular} cell complex of dimension $d$ and
let $\KK$ be a field.  Following \cite{CPS1}, we will associate to
$X$, under certain global assumptions, a quadratic $K$-algebra $R(X)$
(defined below).  The main focus of \cite{CPS1} is to determine the
combinatorial properties required for this algebra to be {\it Koszul}.
The primary focus of this paper is to show that the Koszul property is
actually a topological invariant, even though the algebra is not.  In
the process we see that our global assumptions also imply some
restrictions on singularities of appropriate spaces $X$.

After a definition of our two technical assumptions we can state our
main theorem.  Our complexes will be finite throughout.  We will not
generally restate this hypothesis.

\begin{definition} Let $X$ be a regular cell complex of dimension $d$.  
\item(1) $X$ is {\it pure} if $X$ is the closure of its open $d$-cells.
\item(2) A pure, finite regular cell complex $X$, is {\it connected through codimension one faces} if the space $X- X^{(d-2)}$ is path connected (where $X^{(d-2)}$ is the $(d-2)$-skeleton of $X$). 
\end{definition}

\begin{theorem}\label{main-theorem-intro}
Let $X$  be a pure regular cell complex of dimension $d$, connected through codimension one faces.  Then $R(X)$
is Koszul (for the field $\mathbf{K}$) if any only if the following conditions both
hold.
\begin{enumerate}
\item $\tilde{H}_{i}(X;\KK) = 0$ for $i<d$.
\item $\tilde{H}_{i}(X,X-\{p \};\KK) = 0$ for each $p \in X$ and
each $i<d$.
\end{enumerate}
\end{theorem}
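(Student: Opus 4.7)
The plan is to translate the combinatorial Koszulity criterion for $R(X)$ proved in \cite{CPS1} into the two topological conditions (1) and (2) by analyzing the reduced $\KK$-homology of links of cells.

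First, I would recall from \cite{CPS1} that Koszulity of $R(X)$ is equivalent to a Cohen--Macaulay-type condition on the face poset of $X$, requiring that certain order-complex-like subcomplexes associated to intervals in that poset be acyclic outside their top degree.

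Second, I would compute local homology cell-by-cell. For a point $p$ in the interior of a $k$-cell $\sigma$ of the regular cell complex $X$, excision together with the local product structure of a neighborhood of $p$ yields
\[
\tilde{H}_{i}(X,X-\{p\};\KK)\;\cong\;\tilde{H}_{i-k-1}(\mathrm{lk}(\sigma);\KK).
\]
Hence condition (2) is precisely the statement that every link $\mathrm{lk}(\sigma)$ has vanishing reduced $\KK$-homology in degrees strictly below $d-k-1$, while condition (1) is the analogous statement for the ``link of the empty face,'' namely $X$ itself. Under purity and connectedness through codimension-one faces one checks that $\dim\mathrm{lk}(\sigma)=d-k-1$, so conditions (1) and (2) together assert exactly that every link is Cohen--Macaulay over $\KK$.

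Third, I would match this link-acyclicity statement with the combinatorial criterion of \cite{CPS1}. The forward direction (Koszul $\Rightarrow$ (1), (2)) should be essentially immediate, since the combinatorial acyclicities force link acyclicity in the required degrees, which by the displayed isomorphism delivers (1) and (2). The converse is the principal obstacle: I must verify that the purely topological hypotheses (1) and (2), together with purity and codimension-one-connectedness, are strong enough to recover \emph{every} combinatorial acyclicity that \cite{CPS1} demand, and not merely those visible cell-by-cell. I expect this step to require Mayer--Vietoris and spectral-sequence arguments that exploit codimension-one connectedness to globalize local vanishing, reconstructing the more intricate order-complex-intersection subcomplexes of the face poset as homotopy colimits assembled from the individual links.
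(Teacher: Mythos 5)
Your overall strategy --- reduce to the combinatorial criterion of \cite{CPS1} and translate it into statements about local homology/links --- is the paper's strategy too, and your excision isomorphism $\tilde{H}_{i}(X,X-\{p\};\KK)\cong\tilde{H}_{i-k-1}(\mathrm{lk}(\sigma);\KK)$ for $p$ in a $k$-cell $\sigma$ is the right dictionary. But you have the difficulty of the two directions exactly reversed, and the direction you dismiss as ``essentially immediate'' is where the entire content of the theorem lies. The CPS criterion, in the form used here (Theorem \ref{CPS-reformulated}), requires $\tilde{H}^{n}(X;\KK)=0$ for $n<d$ and $H^{n}(X,X-\st(\sigma);\KK)=0$ for every $k$-cell $\sigma$ and every $n$ with $k+1<n<d$. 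The restriction $k+1<n$ is essential (see the Remark following Theorem \ref{CPS-reformulated}): since $X-\st(\sigma)$ deformation retracts onto $X-\{p\}$ for $p\in\sigma$, the criterion says nothing about $H^{k+1}(X,X-\{p\})$, i.e.\ nothing about $\tilde{H}^{0}$ of the link. Koszulity does not, on its face, see whether links of cells are connected, whereas condition (2) of the theorem demands this. So your claim that ``the combinatorial acyclicities force link acyclicity in the required degrees'' fails in exactly one degree per cell, and closing that gap is the paper's main technical work: with $m$ minimal such that $S_{m}\ne\emptyset$ (the locus where local homology first fails, in degree $m+1$), Lemma \ref{boundary-subset-lemma} and Proposition \ref{singularity-is-very-singular} run a Mayer--Vietoris induction over hemispherical decompositions of $\partial D\cong S^{m-1}$ for an $m$-cell $D\subseteq S_{m}$ to show that if $S_{m}$ contained no cell of dimension $<m$ then $0\ne H_{m}(X-D)\cong\tilde{H}_{0}(X-\overline{D})$, contradicting connectivity through codimension-one faces. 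Hence $S_{m}$ contains a cell $\alpha$ of dimension $k<m$, where $m+1>k+1$ and the CPS criterion does apply and is violated. Without an argument of this kind your forward direction does not go through.

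Conversely, the direction you flag as the principal obstacle is the easy one: conditions (1) and (2) are pointwise \emph{stronger} than the CPS criterion, asserting vanishing of $H_{n}(X,X-\{p\})$ for all $n<d$ rather than only for $n>k+1$. After the identification $H^{n}(X,X-\st(\sigma);\KK)\cong H^{n}(X,X-\{p\};\KK)$ via Lemma \ref{def-retract} and universal coefficients over the field $\KK$, Koszulity follows at once; no homotopy-colimit reassembly or spectral sequence is needed. A smaller point: the CPS criterion for $R(\hat{P}(X))$ is not Cohen--Macaulayness of the face poset --- that weaker condition governs $R(\bar{P}(X))$, which is always Koszul under the standing hypotheses --- so your first step should cite the vanishing of the groups $H^{n}_{k}(X;\KK)$ for $0\le k<n<d$ rather than a CM condition on order complexes.
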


Because our hypotheses on the cell complex structure and on homology
are obviously homeomorphism invariant,
Theorem~\ref{main-theorem-intro} shows that the Koszul property for $R
(X)$ is a homeomorphism invariant.
We point out, however, that one does not have any nice homotopy invariance.

\begin{example}\label{koszul-not-top-invt}
There are homotopy equivalent, pure regular cell complexes $X$ and $Y$
of dimension $3$ such that $R (X)$ is not Koszul but $R
(Y)$ is Koszul.
\end{example}

Take $Y$ to be the union of two $3$-cells attached by some
2-dimensional face.  Then $Y$ satisfies the hypotheses of
Theorem~\ref{main-theorem-intro}.  Take $X$ to be the Example
5.9 of \cite{CPS1}, which is described explicitly as Example~\ref{s1-bad}.  $X$ is homotopy equivalent to $Y$ since one gets
a space homeomorphic to $Y$ by collapsing the contractible subcomplex
$a$ of $X$ to a point.  But \cite{CPS1} shows $R (X)$ is not Koszul (as one can also see by Theorem~\ref{main-theorem-intro}).

Although our argument does not make direct use of the definition of $R
(X)$, we review that definition here in the interest of
self-containedness.

Let $P$ be any finite ranked poset with minimal element $\bar 0$. For
each $x\in P$ let $s_{1}(x) = \{y\in P\,|\, y<x, \rk(x)-\rk(y)=1\}$ (the
elements immediately below $x$ in the ranked poset).  We define $R(P)$ to be the quadratic $\KK$-algebra on generators $r_x$, $x\in P-\{\bar 0\}$ with relations:
\[
r_x r_y = 0 \hbox{ for all } y\not\in s_{1}(x)
\]
and
\[
r_x \sum\limits_{z\in s_{1}(x)} r_z = 0 \hbox{ for all } x
\]

The set of all {\it closed} cells of a regular cell complex, together
with the empty set, form a finite ranked poset under set inclusion.
Following \cite{CPS1}, this is denoted $\bar P(X)$.  (In this poset
the rank of a cell is one more than its dimension, so that the rank of
the empty set is $0$).

If we assume that $X$ is pure, then we may adjoin one additional
(maximal) element to the poset $\bar P(X)$.  The resulting poset,
which is denoted $\hat P(X)$ is still a ranked poset.  If we further
assume that $X$ is connected through codimension one faces, then $\hat
P(X)$ has the combinatorial property known as {\it uniform}
(cf. \cite{GRSW05}).  Then we define $R(X)$ to be $R(\hat P(X))$.
While $R (\bar{P}(X))$ is always Koszul under the hypotheses that $X$
is pure and connected through codimension one faces (see
\cite{CPS1} and \cite{RSW08}), the Koszul property for $R (X)$ is
substantially more subtle.  Theorem 5.3 of \cite{CPS1} gives a precise
statement in terms of the combinatorial cell structure of $X$
describing when $R(X)$ is Koszul (refer to Theorem \ref{CPS-main}
below).

\section{CPS cohomology and local homology}\label{local-CPS}

We fix $X$, a finite regular CW complex of dimension $d$.  We begin by recalling the definitions of the groups
$H_{X} (n,k)$
from \cite[\S 4]{CPS1}, which we will write as $H^{n}_{k}(X)$. 

Assign orientations to each cell of $X$.
If $\beta$ is an $n$ cell and $\alpha$ is an $n+1$ cell, let
$[\alpha  : \beta]$ be the incidence number of $\beta$ in $\alpha$.
Because $X$ is regular, this is either $0, 1$ or $-1$. These
incidence numbers are usually defined in the context of cellular
homology so that, if $C_{*}(X)$ is the cellular chain complex of $X$,
and $\alpha$ is an $n+1$ cell, 
\[
d (\alpha) = \sum_{n-1 \mbox{ cells } \beta}[\alpha :\beta]\beta 
\]
Because $X$ is finite, and because we have a chosen basis for the
cellular chains (given by the cells of $X$) we have an isomorphism
between the cellular chains and the cellular cochains of $X$.  We
consider the cochains in dimension $n$ to be generated by the basis
dual to the $n$ cells of $X$, but we will use the same notation.  That is, an
$n$-cell $\alpha$ when considered as a generator of $C^{n}(X)$ will be
thought of as dual to the $n$-cell $\alpha$ in the basis of $C_{n}(X)$
provided by the $n$-cells.  With this identification, the coboundary map $\delta 
:C^{n}(X) \rightarrow C^{n+1}(X)$ is given by 
\[
\delta  (\alpha) = \sum_{n+1 \mbox{ cells } \beta}[\beta :\alpha]\beta.
\]
We define $C^{n}_{k}(X)$ to be the submodule of $C^{n}(X)\otimes
C_{k}(X)$ generated by $\alpha \otimes \beta$ such that $\beta
\subseteq \partial \alpha$ (that is, the cell associated to $\beta$ is
a subset of the boundary of the cell associated to $\alpha$). Then $d$
induces a differential $C^{n}_{k}(X) \rightarrow C^{n}_{k-1}(X)$ and
$\delta$ induces a differential $C^{n}_{k}(X) \rightarrow
C^{n+1}_{k}(X)$.

\begin{definition}\label{l-n-k}
(\cite[Definition 4.1]{CPS1}) For each $k$ and $n$, let:
\[
L^{n}_{k}(X) = \coker (C^{n}_{k+1}(X) \xrightarrow{d}C^{n}_{k}(X) ).
\]
Then $L_k^*(X)$ is a cochain complex with differential induced by $\delta$.
The CPS cohomology groups of $X$ are defined by 
\[
H^{n}_{k}(X) = H^{n}(L^{*}_{k}(X)).
\]
These cohomology groups are defined with coefficients in $\Z$.  We write $H_k^n(X;R)$ to denote the same groups when calculated with coefficients in a commutative coefficient ring $R$.
\end{definition}

We now recall Theorem 5.3 of \cite{CPS1}.

\begin{theorem}\label{CPS-main} Let $X$ be a pure regular cell complex of dimension $d$, connected by codimension one faces. Then the $\KK$-algebra $R(X)$ is Koszul if and only
if $H^{n}_{k}(X,\KK) = 0$ for $0\leq k<n<d$.
\end{theorem}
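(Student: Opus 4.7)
The plan is to identify the CPS cohomology groups $H^{n}_{k}(X;\KK)$ with suitable bigraded components of $\operatorname{Ext}_{R(X)}(\KK,\KK)$, and then to invoke the standard homological criterion that a quadratic algebra $R$ is Koszul if and only if $\operatorname{Ext}^{i,j}_{R}(\KK,\KK)$ is concentrated on the diagonal $j=i$ in its natural bigrading.

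First I would construct a candidate bigraded free resolution $F_{\bullet}\to\KK$ of the trivial module over $R(X)$ built directly from the combinatorics of $\hat{P}(X)$. The module $F_{n}$, generated in internal degree $n$, should have one generator for each saturated chain of length $n$ in $\hat{P}(X)$ emanating from $\bar 0$, with the differential assembled from the two families of quadratic relations that define $R(X)$ (the orthogonality relations $r_{x}r_{y}=0$ for $y\notin s_{1}(x)$, and the signed summation relations). The combinatorial hypothesis that $\hat{P}(X)$ is uniform---guaranteed by purity together with codimension-one connectedness---provides exactly the flag-counting symmetry needed to verify that this candidate is acyclic in positive degrees.

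Next, applying $\Hom_{R(X)}(-,\KK)$ and tracking the internal grading produces a bigraded cochain complex. I would identify the piece in homological degree $n$ and internal degree $k$ with $C^{n}_{k}(X)$ of Definition~\ref{l-n-k}: the basis $\alpha\otimes\beta$ of $C^{n}_{k}(X)$ (with $n$-cell $\alpha$ and $k$-subcell $\beta\subseteq\partial\alpha$) corresponds naturally to saturated chains in $\hat{P}(X)$ passing through $\alpha$ and $\beta$. Under this dictionary, the cellular boundary $d$ is the differential that arises from the summation relations---producing exactly the quotient in $L^{n}_{k}(X)$---while the cellular coboundary $\delta$ matches the cohomological differential that computes $\operatorname{Ext}$ in the homological index. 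The outcome is an isomorphism expressing each $H^{n}_{k}(X;\KK)$ as a specific bigraded piece of $\operatorname{Ext}_{R(X)}(\KK,\KK)$.

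With this identification in hand, the Koszulity criterion translates directly into vanishing of $H^{n}_{k}(X;\KK)$ off an appropriate diagonal. The restriction $0\leq k<n<d$ in the statement reflects what is automatic: $C^{n}_{k}(X)$ vanishes unless $k<n$, eliminating half the off-diagonal contribution for free, while the cutoff $n<d$ requires a separate verification that the Ext groups corresponding to $n\geq d$ vanish automatically under the global hypotheses, using the role of the adjoined maximum of $\hat{P}(X)$ and the dimension bound on cells of $X$. The main obstacle is the first step: the quadratic presentation of $R(X)$ was engineered to mirror the incidence structure of $\hat{P}(X)$, but converting that parallel into a rigorous proof of acyclicity of $F_{\bullet}$ requires the full strength of uniformity of $\hat{P}(X)$ together with a careful sign analysis tracking cellular orientations through the bar construction.
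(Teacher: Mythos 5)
First, a point of order: the paper does not prove Theorem~\ref{CPS-main} at all --- it is imported verbatim as Theorem 5.3 of \cite{CPS1} (``We now recall Theorem 5.3 of \cite{CPS1}''), so there is no internal proof to compare against, and your proposal has to stand on its own. As written it contains a fatal internal inconsistency. You construct a complex $F_{\bullet}$ of free $R(X)$-modules with $F_{n}$ generated in internal degree $n$ by the saturated length-$n$ chains of $\hat{P}(X)$, and you assert that uniformity of $\hat{P}(X)$ ``provides exactly the flag-counting symmetry needed to verify that this candidate is acyclic.'' But a free resolution of $\KK$ whose $n$-th term is generated in internal degree $n$ is precisely a linear resolution, and its existence is \emph{equivalent} to Koszulity. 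If uniformity alone forced $F_{\bullet}$ to be acyclic, the theorem would collapse to ``$R(X)$ is always Koszul under the stated hypotheses,'' which is false: Example~\ref{s1-bad} (Example 5.9 of \cite{CPS1}) is pure and connected through codimension one faces, hence has uniform $\hat{P}(X)$, yet $R(X)$ is not Koszul. The entire content of the theorem is that this chain-indexed complex (essentially the Koszul complex $R\otimes(R^{!})^{*}$) is merely a complex, and that the obstruction to its exactness is computed by the groups $H^{n}_{k}(X;\KK)$. You cannot first prove it exact and then read off those groups as obstructions.

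The bigrading bookkeeping in your second step is also inconsistent with the first. If $F_{n}$ really were generated purely in internal degree $n$, then $\Hom_{R(X)}(F_{n},\KK)$ would be concentrated in internal degree $n$, so there is no ``piece in homological degree $n$ and internal degree $k$'' with $k<n$ available to identify with $C^{n}_{k}(X)$; indeed, for any quadratic algebra generated in degree one, $\operatorname{Ext}^{n,j}_{R}(\KK,\KK)=0$ automatically for $j<n$, so the groups $H^{n}_{k}$ with $k<n$ cannot literally be such Ext groups. Moreover the generators $\alpha\otimes\beta$ of $C^{n}_{k}(X)$ index \emph{pairs} of comparable cells with dimension gap $n-k$, not saturated chains, and the passage to $L^{n}_{k}(X)=\coker(d)$ before taking $\delta$-cohomology has no counterpart in your dictionary. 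A workable version of your strategy must realize $H^{n}_{k}(X;\KK)$ as the homology of the (generally non-exact) Koszul or linear-strand complex in an appropriate off-diagonal bidegree; setting up and verifying that identification is where essentially all of the work of \cite{CPS1} lies, and your proposal assumes it away.
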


For our purposes it is convenient to present a reformulation of Theorem \ref{CPS-main} in terms of relative cohomology groups involving the {\it stars} of the cells of $X$.

\begin{definition}
The \emph{star} of a cell $\sigma$ in a regular cell complex $X$ is
\[
\st (\sigma) = \{y\in X: y \text{ is in some open cell whose closure contains }
\sigma \}
\]
\end{definition}
We note that $\st (\sigma)$ is an open subset of $X$.  We also use
$\st^{l}(\sigma)$ to denote the union of the open cell $\sigma$ with
all open cells in $\st (\sigma)$ of dimension $\leq l$.

\begin{theorem}\label{CPS-reformulated} Let $X$ be a pure regular cell complex of dimension $d$, connected by codimension one faces. Then the $\KK$-algebra $R(X)$ is Koszul if and only 
\item(1) $\tilde H^n(X,\KK) = 0$ for $n<d$ and 
\item(2) For every $k$-cell $\sigma$ and $k+1<n<d$, $H^n(X,X-\st(\sigma);\KK) = 0$. 
\end{theorem}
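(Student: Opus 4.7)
The equivalence follows from Theorem \ref{CPS-main} by translating the condition $H^n_k(X;\KK) = 0$ for $0 \leq k < n < d$ into the geometric conditions (1) and (2). The plan is to compute $H^n_k(X;\KK)$ in terms of the cellular cochain complexes of the pairs $(X, X - \st(\sigma))$ for cells $\sigma$ of $X$, and so to match the CPS vanishing range against conditions (1) and (2) term by term.

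The first step is to decompose $(C^*_k(X), \delta)$ by $k$-cell. For each $k$-cell $\sigma$, the submodule of $C^n_k(X)$ spanned by basis elements $\alpha \otimes \sigma$ with $\sigma \subseteq \partial\alpha$ is canonically isomorphic, via excision on the pair $(\overline{\st}(\sigma), \overline{\st}(\sigma) - \st(\sigma))$, to the cellular relative cochain $C^n(X, X - \st(\sigma))$. The $\delta$-differential respects this decomposition, so for $n > k$,
\[
(C^n_k(X), \delta) \;\cong\; \bigoplus_{\sigma:\dim\sigma = k} \bigl( C^n(X, X-\st(\sigma)), \delta \bigr),
\]
while the chain differential $d: C^n_{k+1}(X) \to C^n_k(X)$ acts by the cellular boundary in the $\sigma$-coordinate, $\alpha \otimes \sigma' \mapsto \sum_{\sigma < \sigma'} [\sigma':\sigma]\, \alpha \otimes \sigma$, mixing the per-cell summands. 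Forming the cokernel $L^n_k(X)$ and then taking $\delta$-cohomology therefore becomes an analysis of this double complex.

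The second step is the identification of $H^n_k(X;\KK)$. For $n > k + 1$ I expect that the $d$-mixing does not disrupt the splitting, yielding
\[
H^n_k(X;\KK) \;\cong\; \bigoplus_{\sigma:\dim\sigma = k} H^n(X, X - \st(\sigma);\KK),
\]
which translates the vanishing of $H^n_k$ in the range $k + 1 < n < d$ directly into condition (2). The diagonal $n = k + 1$ requires a separate argument: here the cokernel construction interferes with the per-cell decomposition, and a Mayer--Vietoris--type or spectral-sequence argument should yield $H^{k+1}_k(X;\KK) \cong \tilde H^{k+1}(X;\KK)$. Letting $k$ range over $0,\dots,d-2$, and combining with $\tilde H^0(X;\KK) = 0$ (which follows from the codimension-one connectivity hypothesis, since path-connectedness of $X - X^{(d-2)}$ forces connectedness of $X$), these diagonal conditions assemble into condition (1). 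The hard part will be the diagonal case $n = k + 1$: identifying $H^{k+1}_k$ with $\tilde H^{k+1}(X)$ requires carefully tracking a connecting homomorphism that links the local star data to the global reduced cohomology of $X$, together with some attention to low-dimensional anomalies (for instance $\partial\alpha \cong S^0$ when $n = 1$) in the base of the induction.
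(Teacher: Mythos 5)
A preliminary remark: the paper does not actually prove Theorem~\ref{CPS-reformulated}; it imports it as Corollary~5.8 of \cite{CPS1} (adding only the correction that the constraint $k+1<n$ was omitted there), so there is no in-paper argument to compare yours against, and a genuine derivation from Theorem~\ref{CPS-main} is a reasonable thing to attempt. Your first step is sound: for $n\ge k$ the $\delta$-complex $(C^{*}_{k},\delta)$ does split as $\bigoplus_{\dim\sigma=k}C^{*}(X,X-\st(\sigma))$, because the $n$-cells not lying in the subcomplex $X-\st(\sigma)$ are exactly those whose closure contains $\sigma$. The error is in the second step, and it is an off-by-one, not a missing technicality. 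The structural fact you are overlooking is that each row $(C^{n}_{*},d)$ is $\bigoplus_{\dim\alpha=n}\alpha\otimes C_{*}(\overline{\alpha})$, hence exact except at $k=0$, where its homology is $C^{n}(X)$. Consequently $L^{*}_{0}\cong C^{*}(X)$, so $H^{n}_{0}(X)=H^{n}(X)$ for \emph{every} $n$, not only on the diagonal $n=1$; and for $k\ge 1$ the cokernel $L^{*}_{k}$ is isomorphic (via $d$) to the subcomplex $B^{*}_{k-1}=\im(d\colon C^{*}_{k}\to C^{*}_{k-1})$ of $C^{*}_{k-1}$, so $H^{n}_{k}$ is governed by the stars of the $(k-1)$-cells, not the $k$-cells. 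A concrete falsification: for $X$ as in Example~\ref{s1-bad}, the obstruction is $H^{2}(X,X-\st(v_{0}))\ne 0$ with $v_{0}$ a $0$-cell and $n=2$; your dictionary places this in $H^{2}_{0}(X)$, but $H^{2}_{0}(X)=H^{2}(X)=0$ since $X$ is contractible. The nonvanishing in fact occurs in $H^{2}_{1}(X)$, which is where \cite{CPS1} locate it and which lies in the range $k<n<d$ with $k=1$.

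Correcting the shift does not by itself repair the argument, because even $H^{n}_{k}\cong\bigoplus_{\dim\sigma=k-1}H^{n}(X,X-\st(\sigma))$ is not an unconditional isomorphism. What one actually has are short exact sequences of $\delta$-complexes $0\to B^{*}_{k-1}\to C^{*}_{k-1}\to B^{*}_{k-2}\to 0$ for $k\ge 2$ and $0\to B^{*}_{0}\to C^{*}_{0}\to C^{*}(X)\to 0$ at the bottom, together with $L^{*}_{k}\cong B^{*}_{k-1}$; the equivalence of the CPS vanishing package with conditions (1) and (2) is then an induction on $k$ through the resulting long exact sequences, in which the global terms $H^{*}(X)$ enter via connecting homomorphisms at the bottom of the tower rather than along the diagonal $n=k+1$ as you posit. (One further caution: the definition of $C^{n}_{k}$ as printed, with the strict condition $\beta\subseteq\partial\alpha$, already makes a single $2$-simplex fail the test $H^{1}_{0}=0$; the intended condition is $\overline{\beta}\subseteq\overline{\alpha}$, and the row-exactness used above depends on including the diagonal generators $\alpha\otimes\alpha$.)
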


\begin{remark} Theorem \ref{CPS-reformulated} is a reformulation of Corollary 5.8 in \cite{CPS1}.  We wish to point out that the condition $k+1<n$ was inadvertently omitted in their statement. 
\end{remark}

As we will see in the next section, the cohomology groups $H^n(X,X-st(\sigma))$ can be replaced by the {\it local homology} groups 
$H_n(X,X-\{x\})$ for any $x\in \sigma$ (see Lemma \ref{def-retract}).   This suggests the following definition.

\begin{definition}\label{singular-n}
We define the set $S_{n}$ (relative to the ring of coefficients $R$) by $x \in S_{n}$ if $H_{i} (X,X-\{x \};R) = 0$
for $i \leq  n$ and $H_{n+1} (X, X-\{x \};R) \ne 0$.
\end{definition}

Now we can state and prove a proposition equivalent to Theorem \ref{main-theorem-intro}.  We will leave certain technical aspects of the proof to the subsequent two sections, as well as a more extensive discussion of the structure and significance of the sets 
$S_n$.

\begin{proposition}\label{main-prop}
Let $X$ be a pure regular cell complex of dimension d which is
connected through codimension one faces.  Then $R(X)$ is Koszul if and only if
\item(1) $\tilde H^n(X,\KK) = 0$ for $n<d$ and 
\item(2) The sets $S_k$ (relative to $\KK$) are empty for $0\le k \le d-2$. 
\end{proposition}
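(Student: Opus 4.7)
The plan is to derive Proposition \ref{main-prop} from Theorem \ref{CPS-reformulated} together with the forthcoming Lemma \ref{def-retract}. Condition~(1) of the two statements is literally the same (over the field $\KK$, reduced cohomology and reduced homology agree for finite complexes), so I would concentrate entirely on showing that the star-cohomology condition of Theorem~\ref{CPS-reformulated}(2) is equivalent, under our standing hypotheses on $X$, to the condition that $S_k = \emptyset$ for $0 \le k \le d-2$.

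By Lemma \ref{def-retract}, the cohomological condition translates to: for every cell $\sigma$ of dimension $m$, every interior point $x \in \sigma$, and every $n$ with $m+2 \le n \le d-1$, $H_n(X,X-\{x\};\KK) = 0$. To compare this with the $S_k$ condition I would first establish the standard local-structure computation that, for $x$ in the interior of an $m$-cell $\sigma$,
\begin{equation*}
H_n(X,X-\{x\};\KK) \;\cong\; \tilde H_{n-m-1}(L_\sigma;\KK),
\end{equation*}
where $L_\sigma$ denotes the link of $\sigma$. This follows by excision on a small conical neighborhood of $x$ (which in a regular cell complex has the form of Euclidean $m$-space times the open cone on $L_\sigma$), K\"unneth, and the contractibility of the cone. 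In particular $H_i(X,X-\{x\};\KK)$ vanishes automatically for $i \le m$, so the hypothesis $x \in S_k$ forces $k \ge m$ and is equivalent to saying that $L_\sigma$ has its first nonvanishing reduced $\KK$-homology in degree $k-m \ge 0$.

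With this dictionary, both conditions become statements about the reduced homology of links. The condition from Theorem \ref{CPS-reformulated} asks that $\tilde H_j(L_\sigma;\KK) = 0$ for $1 \le j \le \dim L_\sigma - 1$ for every cell $\sigma$, while $S_k = \emptyset$ for $0 \le k \le d-2$ asks that $\tilde H_j(L_\sigma;\KK) = 0$ for $0 \le j \le \dim L_\sigma - 1$ for every cell $\sigma$ of codimension at least two. These differ only in the $j=0$ case, i.e., in whether the link $L_\sigma$ is required to be connected.

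The main obstacle is closing this gap, i.e., showing that under our hypotheses the link of every cell of codimension at least two is automatically path connected. I would handle this via an inheritance lemma: if $X$ is a pure regular cell complex connected through codimension one faces, then so is $L_\sigma$ for every $\sigma$ with $\dim \sigma \le d-2$; since $\dim L_\sigma \ge 1$ in that range, such a link is forced to be path connected, contributing $\tilde H_0(L_\sigma;\KK) = 0$. Granting such an inheritance statement (which I would expect to appear alongside Lemma \ref{def-retract} in the next section), the two conditions coincide and Proposition \ref{main-prop} follows.
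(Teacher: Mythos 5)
Your reduction of the problem is essentially right and parallels the paper's: condition (1) is shared, Lemma \ref{def-retract} converts the star condition into local homology, and (granting the standard local cone-structure computation) the only discrepancy between Theorem \ref{CPS-reformulated}(2) and the emptiness of the $S_k$ is the degree-zero case, i.e.\ connectivity of the links of cells of codimension at least two. Note in particular that for a cell of codimension exactly two the range $k+1<n<d$ in Theorem \ref{CPS-reformulated}(2) is empty, so the star condition at such a cell says nothing at all; the entire content of the converse direction is concentrated in exactly the gap you identified.

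The genuine gap is that the inheritance lemma you invoke to close it is false. It is not true that a pure regular cell complex connected through codimension one faces has connected links at all cells of codimension $\geq 2$. The paper's own Example \ref{s1-bad} is a counterexample: two $3$-simplices glued along a $2$-face and along the edge $v_0v_3$ give a contractible, pure complex connected through codimension one faces in which the link of the $1$-cell $e=\overline{v_0v_3}$ is two disjoint arcs, so $\tilde H_0(L_e;\KK)\neq 0$ and $e\subseteq S_1$, even though $H^n(X,X-\st(e);\KK)=0$ for all $n$ in the (empty) required range. What is actually true, and what the paper proves, is a conditional and global statement (Proposition \ref{singularity-is-very-singular}, resting on the Mayer--Vietoris induction of Lemma \ref{boundary-subset-lemma}): assuming $\tilde H_i(X;\KK)=0$ for $i<d$ and connectivity through codimension one faces, if $S_n\neq\emptyset$ for some minimal $n<d-1$ then $S_n$ must contain a cell of dimension \emph{strictly less} than $n$, and at that lower-dimensional cell the star condition of Theorem \ref{CPS-reformulated}(2) does detect the failure. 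In Example \ref{s1-bad} this is the vertex $v_0\in S_1$. So the singularity is not ruled out link-by-link; it propagates down the cell structure, and proving that propagation requires condition (1) and the global connectivity hypothesis in an essential way. Your proof cannot be repaired by any purely local lemma about links.
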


\begin{proof}
In this proof all homology and cohomology groups should be computed relative to the field $\KK$.  We suppress this from the notation.

We need only see that condition (2) of Theorem \ref{CPS-reformulated} and condition (2) of Proposition \ref{main-prop} are equivalent under the hypotheses on $X$ and condition (1).  Suppose the sets $S_k$ are empty for $0\le i\le d-2$.  Then by
Lemma~\ref{def-retract} $H_{i}(X,X-\st (\sigma)) = 0$ for every cell
$\sigma$ and every $i<d$.  The same follows by the universal
coefficient theorem for $H^{i}(X,X-\st (\sigma))$.

Conversely, assume that for some $i \le d-2$, the set $S_i$ is not empty. Let $m$ be minimal such that
$S_{m} \ne \emptyset$.  By Lemma~\ref{singular-set-union-of-cells} and Proposition~\ref{singularity-is-very-singular}, $S_m$ is a union of cells and must contain a cell $\alpha$ of dimension $k< m$.   By Lemma \ref{def-retract}, $H^{m+1}(X,X-st(\alpha))$ does not vanish, contradicting (2) of Theorem \ref{CPS-reformulated}.
\end{proof}

\section{Preliminary homotopy results}

Let $X$ be a regular cell complex of dimension $d$.  If $x \in X$, we write $\sigma (x)$ for the unique open cell of $X$
containing $x$.  
The following is a standard lemma of piecewise linear
topology.

\begin{lemma}\label{def-retract}
Given a cell $\sigma$, $\st (\sigma)$ is contractible (and in fact has
a strong deformation retract to $\sigma$).  Also, given any point $x
\in \sigma $, there is a strong deformation retract
\[
X- \{x \} \rightarrow X- \st (\sigma (x)).
\]
\end{lemma}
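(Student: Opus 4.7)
Both statements are standard results of piecewise-linear topology. My plan is to work throughout in the first barycentric subdivision $X'$ of $X$, which is the order complex of the face poset and is a simplicial complex whose underlying space is canonically identified with $|X|$. In $X'$, the open star $\st(\sigma)$ is a union of open simplices, namely those corresponding to chains of cells $\tau_0 < \cdots < \tau_r$ with $\tau_r \geq \sigma$ in the face poset; equivalently, $\st(\sigma) = \bigcup_{\tau \geq \sigma} \tau^o$ where $\tau^o$ denotes the open cell.

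For the first assertion, I would construct the strong deformation retract $\st(\sigma) \to \sigma$ cell by cell. For each cell $\tau \geq \sigma$, the closed cell $\bar\tau$ is PL-homeomorphic to a disk in which $\bar\sigma$ is a face, so there is a PL join decomposition $\bar\tau \cong \bar\sigma * L_\tau$ where $L_\tau$ is the link of $\sigma$ inside $\bar\tau$. Linearly contracting the $L_\tau$-coordinate to zero retracts $\st(\sigma) \cap \bar\tau$ onto $\bar\sigma \cap \st(\sigma) = \sigma^o$ (the last equality follows from comparing the two descriptions as unions of open cells). Because these local retractions are defined by straight-line formulas in the barycentric coordinates of $X'$, they agree on shared faces and assemble into a well-defined PL strong deformation retract of $\st(\sigma)$ onto $\sigma$.

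For the second assertion, given $x$ in the open cell $\sigma = \sigma(x)$ of dimension $k$, I would use a PL regular neighborhood of $x$ in $X$. Since $x$ lies in the interior of $\sigma$, such a neighborhood is PL-homeomorphic to $D^k \times C$, where $D^k$ is a small PL disk around $x$ inside $\sigma$ and $C$ is a cone on the link of $\sigma$ in $X$. Radial projection away from the apex point $x = \{0\} \times \{\text{apex}\}$ gives a strong deformation retract of the punctured neighborhood onto its topological frontier, which lies in $X - \st(\sigma)$. Extending by the identity on the complement of the neighborhood produces the required strong deformation retract $X - \{x\} \to X - \st(\sigma(x))$.

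The main technical obstacle in both constructions is that the homotopies are built piecewise on cells and must glue continuously across boundaries. This is handled by carrying out everything in the PL category supplied by $X'$: the barycentric subdivision provides consistent local coordinates, each local homotopy is linear in these coordinates, and agreement on shared faces is therefore automatic. Because the authors invoke this as a standard lemma, a fully detailed write-up would be deferred to a reference in PL topology (e.g.~Rourke--Sanderson), and the above sketch suffices to indicate why the constructions work.
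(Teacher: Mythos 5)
There are two genuine gaps here, one in each half. For the first assertion, your key step is the join decomposition $\bar\tau \cong \bar\sigma * L_\tau$ for every cell $\tau \geq \sigma$. This is not a consequence of regularity: it amounts to asserting that $(\bar\tau,\bar\sigma)$ is an \emph{unknotted} ball pair, and after fixing the PL structure coming from $X'$ this can fail (PL unknotting of ball pairs is a codimension~$\geq 3$ theorem; knotted pairs exist in codimension~$2$, and nothing in the definition of a regular cell complex rules them out). Worse, even where such decompositions exist they are abstract homeomorphisms chosen cell by cell, not formulas in the barycentric coordinates of $X'$, so your claim that the local retractions ``agree on shared faces'' automatically is unjustified --- arranging compatibility of the join structures across faces is precisely the hard part. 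The paper sidesteps all of this: it builds $H$ by induction over skeleta, extending over each new cell $E$ using only the homotopy extension property of the CW pair $(E\times I,\, E\times\{0\}\cup E'\times I)$, which needs no PL structure and no unknotting.

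For the second assertion the problem is more basic: radial projection onto the frontier of a small regular neighborhood $N$ of $x$ does not land in $X-\st(\sigma(x))$. Take $X$ a single closed $2$-cell and $x$ an interior point: then $\st(\sigma(x))$ is the open $2$-cell, $X-\st(\sigma(x))$ is the boundary circle, but the frontier of $N$ is a small circle around $x$ lying entirely \emph{inside} $\st(\sigma(x))$. Your construction therefore retracts $X-\{x\}$ onto $X-\mathrm{int}\,N$, a strictly larger set than the target, and the lemma's content is exactly the remaining (nontrivial) push of every point of the open star out to the subcomplex $X-\st(\sigma(x))$. The paper does this by first retracting $\overline{\sigma(x)}-\{x\}$ onto $\partial\overline{\sigma(x)}$ and then extending cell by cell over $\st(\sigma(x))$ via the homotopy extension property; this cellwise form of the homotopy (tracks confined to closed cells) is also what Corollary~\ref{main-retract} and its applications later require, so it cannot be replaced by an argument that only controls a small neighborhood of $x$.
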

\begin{proof}
To see $\st (\sigma)$ has a strong deformation retract to $\sigma $ we
want a homotopy 
\[
H: \st (\sigma)\times I \rightarrow  \st (\sigma ).
\]
Of course $H|_{\sigma \times I}$ will just be the projection to
$\sigma$.  

Now suppose $H$ has been defined on the subset of $\st (\sigma)$
consisting of $\sigma$ together with other open cells up through cells
of dimension $l$.  Since $X$ was a regular cell complex, for each open
$l+1$ cell $E$ of $\st (\sigma )$, $H$ is defined on a contractible
subset of the boundary, $E' \subseteq \partial E$.  So $H$ is defined
on $W = E\times \{0 \}\cup E'\times I$.  The pair $(E\times I,W)$
has the homotopy extension property (see \cite[p. 23]{Hatcher02}), so
we use that to define $H$ on $E\times I$.

To define our retract   
\[
X- \{x \} \rightarrow X- \st (\sigma (x)).
\]
we begin by noting there is a strong deformation retract
$\overline{\sigma (x)} - \{x \}$
to $\partial \overline{\sigma (x)}$.
Now assume the homotopy is defined on $\st^{l}(\sigma (x)) -\{x \}$
(and of course is the identity on $X-\st (\sigma (x))$).  Note that
$\st^{0}(\sigma (x)) = \sigma (x)$.

Let $E$ be the closure of an $l+1$ cell of $\st (\sigma (x))$.  Since the pair
\[
( (E-\{x \})\times I,(E-\{x \})\times \{0  \}\cup (\partial E - \{x
\})\times I)
\]
has the homotopy extension property, extend $H$ across $E-\{x \}$.
Continue until the homotopy is defined on all of $X - \{x \}$.
\end{proof}

\begin{corollary}\label{main-retract}
Given a cell $\sigma$ there is a strong deformation retract 
\[
X-\sigma
\rightarrow X- \st (\sigma)
\]
such that if $x \in E$ for some cell $E$, then the image of $x\times
I$ is in $\overline{E}$, and meets no cells of $\st (\sigma)$ other
than $E$.
\end{corollary}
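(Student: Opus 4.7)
The plan is to mimic the inductive cell-by-cell construction in the proof of Lemma~\ref{def-retract}, with the open cell $\sigma$ playing the role of the single point $x$, while tracking the additional cell-preservation property at each step. I begin by setting $H$ equal to the identity on $X - \st(\sigma)$. Since every proper face of $\sigma$ has closure not containing $\sigma$, we have $\partial\sigma \subseteq X - \st(\sigma)$, so the base case is handled automatically: there is nothing to do on $\partial\sigma$.

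The induction is on $l \ge \dim\sigma$, extending $H$ to $\st^l(\sigma) - \sigma$. At the step for an open $(l{+}1)$-cell $E \in \st(\sigma)$, $H$ is already defined on $(\partial E - \sigma) \times I$ and on $E \times \{0\}$ as the identity. The pair $(\overline{E} - \sigma,\, \partial E - \sigma)$ has the homotopy extension property, and $\overline{E} - \sigma$ deformation retracts onto the nonempty subcomplex $\overline{E} - \st(\sigma) \subseteq \partial E$, so \emph{some} extension exists; the content of the claim is to choose an extension satisfying the cell-preservation condition.

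To achieve this I exploit regularity. The characteristic map $\phi_E : D^{l+1} \to \overline{E}$ is a homeomorphism, sending the open interior of $D^{l+1}$ onto $E$ and the boundary sphere $S^l$ onto $\partial E$ cellularly. Since $\overline{E} - \st(\sigma)$ is the union of closed faces of $E$ whose closure does not contain $\sigma$, it is a contractible proper subcomplex of $S^l$ disjoint from the closed subball $\phi_E^{-1}(\overline{\sigma})$. First extend the inductively given endpoint map $H_1|_{\partial E - \sigma} : \partial E - \sigma \to \overline{E} - \st(\sigma)$ to a retraction $r : \overline{E} - \sigma \to \overline{E} - \st(\sigma)$, using the contractibility of the target. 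Then define $H(x, t)$ for $x \in E$ to be the image under $\phi_E$ of the straight-line homotopy in $D^{l+1}$ from $\phi_E^{-1}(x)$ to $\phi_E^{-1}(r(x))$. A line from an interior point of $D^{l+1}$ to a boundary point lies in the interior except at the endpoint, so the image of $x \times I$ is contained in $E \cup \{r(x)\} \subseteq E \cup (\overline{E} - \st(\sigma))$; in particular it lies in $\overline{E}$ and meets no cell of $\st(\sigma)$ other than $E$.

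The main obstacle is the compatibility of the straight-line extension with the inductively defined $H$ on $\partial E$: the inductive $H$ on a face $E' \subseteq \partial E$ was built using a different characteristic map $\phi_{E'}$, and need not be a straight line under $\phi_E$ near $\partial E$. The standard remedy is an interpolation using a collar of $\partial E$ in $\overline{E}$: pick such a collar, and on it blend continuously from the inductively defined $H$ at the outer boundary to the straight-line homotopy farther into $\overline{E}$. Both homotopies take values in the target region $E \cup (\overline{E} - \st(\sigma))$ on their respective domains, so the interpolation can be chosen to stay there, preserving the cell-preservation property. Carrying out this construction for every $E \in \st(\sigma)$ in order of increasing dimension produces the desired global strong deformation retract.
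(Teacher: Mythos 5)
Your overall strategy coincides with the paper's: the paper's entire proof is the sentence ``apply the strong deformation retract of Lemma~\ref{def-retract} to the space $X-\sigma$,'' i.e.\ rerun the cell-by-cell induction with the identity as base case on $X-\st(\sigma)\supseteq\partial\sigma$, exactly as you do. You are in fact more careful than the paper, which never verifies the cell-preservation clause at all, and your straight-line construction inside each open cell $E$ is a legitimate way to force the tracks of interior points into $E\cup(\overline{E}-\st(\sigma))$.

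However, the one step where that extra care actually matters --- the collar interpolation --- rests on a false assertion. You claim that ``both homotopies take values in the target region $E\cup(\overline{E}-\st(\sigma))$.'' The straight-line homotopy does, but the inductively defined $H$ on $\partial E-\sigma$ does not: if $E'$ is an open face of $E$ with $E'\in\st(\sigma)$ and $E'\neq\sigma$ (such faces exist as soon as $\dim E\geq\dim\sigma+2$), then for $y\in E'$ the track $H(y\times I)$ lies in $\overline{E'}$ and passes through $E'$ itself, a cell of $\st(\sigma)$ other than $E$ that is disjoint from $E\cup(\overline{E}-\st(\sigma))$. Hence any blend that lets a collar point $(y,s)$ with $s>0$ --- which is a point of the open cell $E$ --- actually travel along $H(y,\cdot)$ for positive time drags that point through $E'$, violating precisely the property being proved. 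The repair is to use the collar coordinate to push the boundary tracks off $\partial E$: in the ball model send $(y,s)$ along $t\mapsto(1-s)\,H(y,t)$, which lies in the open ball and hence in $E$ for $s>0$, and then radially outward to $H(y,1)\in\overline{E}-\st(\sigma)$; this converges to a reparametrization of $H(y,\cdot)$ as $s\to0$ and keeps every interior track inside $E$ until the final instant. With that correction (and a justification of the contractibility of $\overline{E}-\st(\sigma)$, which you assert without proof; note also that this set is disjoint from the open cell $\sigma$ but not from $\overline{\sigma}$, since it contains $\partial\sigma$), your argument goes through and establishes the corollary in essentially the way the paper intends.
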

\begin{proof}
We apply the strong deformation retract of Lemma~\ref{def-retract} to the space $X- \sigma$.
\end{proof}

As an application of Corollary~\ref{main-retract} we have the
following.

\begin{corollary}\label{point-retract}
Let $D$ be an open $n$-cell of $X$  and $\sigma $  a $0$-cell in $\partial D$.
Then 
\[
X - (\sigma  \cup D)  \simeq X-\sigma.
\]
\end{corollary}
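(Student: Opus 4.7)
The plan is to apply Corollary \ref{main-retract} directly to the $0$-cell $\sigma$. This yields a strong deformation retract $F \colon (X - \sigma) \times I \to X - \sigma$ with $F_0 = \mathrm{id}$, $F_1(X - \sigma) \subseteq X - \st(\sigma)$, and $F_t$ fixing $X - \st(\sigma)$ pointwise. Crucially, Corollary \ref{main-retract} also guarantees that the track $F_t(x)$ of any point lies in the closed cell $\overline{\sigma(x)}$ and meets no open cell of $\st(\sigma)$ other than $\sigma(x)$ itself. I expect this $F$, after restricting its codomain, to witness the asserted homotopy equivalence.

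Since $\sigma \in \partial D$, the open cell $D$ is contained in $\st(\sigma)$, so $X - \st(\sigma) \subseteq X - (\sigma \cup D)$ and $F_1$ factors as $X - \sigma \xrightarrow{g} X - (\sigma \cup D) \xrightarrow{\iota} X - \sigma$, where $\iota$ is the inclusion. The full homotopy $F$ already shows $\iota \circ g = F_1 \simeq \mathrm{id}_{X - \sigma}$. To obtain the other composition, I will verify that restricting $F$ to $(X - (\sigma \cup D)) \times I$ takes values in $X - (\sigma \cup D)$, which will give $g \circ \iota \simeq \mathrm{id}_{X - (\sigma \cup D)}$.

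The verification splits into two cases for $x \in X - (\sigma \cup D)$. If $x \notin \st(\sigma)$ then $F_t(x) = x$ lies in $X - (\sigma \cup D)$ trivially. If $x \in \st(\sigma) - \sigma - D$, then the open cell $\sigma(x)$ is distinct from both $\sigma$ and $D$; the cell-tracking clause in Corollary \ref{main-retract} then forces the entire track $F_t(x)$ to remain in $\overline{\sigma(x)}$ and to avoid every open cell of $\st(\sigma)$ other than $\sigma(x)$. In particular $F_t(x) \neq \sigma$ and $F_t(x) \notin D$, so $F_t(x) \in X - (\sigma \cup D)$.

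The main obstacle is not a genuine difficulty but the need to invoke the full strength of Corollary \ref{main-retract}: the weaker Lemma \ref{def-retract} would only ensure the track lies in $\overline{\sigma(x)}$, which does not by itself prevent the restricted homotopy from passing through $D$ when $D$ shares boundary faces with $\sigma(x)$. With the stronger cell-by-cell tracking in hand, $g$ and $\iota$ are mutually inverse up to homotopy and the corollary follows.
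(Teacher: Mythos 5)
Your proposal is correct and follows the paper's own route: both apply the strong deformation retract of Corollary \ref{main-retract} for the $0$-cell $\sigma$, observe that it restricts to $X-(\sigma\cup D)$ (since $D\subseteq\st(\sigma)$ and the cell-tracking clause keeps each track out of $D$), and conclude that both $X-\sigma$ and $X-(\sigma\cup D)$ deformation retract to $X-\st(\sigma)$. Your write-up merely makes explicit the verification that the restricted homotopy stays inside $X-(\sigma\cup D)$, which the paper leaves implicit.
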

\begin{proof}
Apply the homotopy from Corollary~\ref{main-retract} to the space $X- (\sigma  \cup
D)$.  This gives a retract of $X- (\sigma  \cup D)$ to $X- (\st (\sigma ))$, and
since that space is also a retract of $X-\sigma $, we get $X- (\sigma  \cup D)
\simeq X-\sigma$.
\end{proof}

\begin{proposition}\label{subdivision-retract}
Let $X$ be the realization of a simplicial complex $\Delta$, and let $A$
be a closed $i$-simplex in $\Delta'$ (the first barycentric subdivision of $\Delta$).
Let $v$ be the vertex that $A$ shares with an $i$-simplex of $\Delta$.
Then 
\[
X - \{v \} \simeq X-A.
\]
\end{proposition}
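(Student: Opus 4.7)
The plan is to localize to the closed star $N = \overline{\st}(v)$ of $v$, and then glue. Since $v$ is a vertex of $\Delta$, $N$ is the topological cone $v * L$ on the link $L$ of $v$, so we have cone coordinates $(l, t) \in L \times [0,1]$ in which $v$ sits at $t = 0$ and $L$ at $t = 1$. The simplex $A$ corresponds to a chain $v = \tau_0 < \tau_1 < \cdots < \tau_i$ in $\Delta$ (the shared-vertex hypothesis forces $\tau_0 = v$); thus $A \subset \tau_i \subset N$, and the vertices of $A$ other than $v$ are the barycenters of the $\tau_k$'s.

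The key local observation is that $A \cap L = \emptyset$, and more precisely, in cone coordinates $A = \{(l, t) : l \in A'',\ t \leq r(l)\}$, where $A''$ is the $(i-1)$-simplex in $L$ spanned by the radial projections of the non-$v$ vertices of $A$ and $r\colon A'' \to (0, 1)$ is continuous with $r(l) < 1$ for all $l$. This follows from an elementary barycentric computation on $\tau_i$: each vertex of $A$ has strictly positive $v$-coordinate (namely $1$ for $v$ and $1/(\dim \tau_k + 1)$ for the barycenter of $\tau_k$), so by convexity every point of $A$ has strictly positive $v$-coordinate, while $L \cap \tau_i$ is the $v$-opposite face of $\tau_i$ on which the $v$-coordinate vanishes. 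Consequently the radial expansion $H_s(l, t) = (l, t + s(1-t))$ is a strong deformation retraction of $N - \{v\}$ onto $L$ that simultaneously restricts to a strong deformation retraction of $N - A$ onto $L$: the condition $t > r(l)$ characterizing points of $N$ outside $A$ is preserved as $t$ increases. Hence the inclusion $N - A \hookrightarrow N - \{v\}$ is a homotopy equivalence that is the identity on $L$.

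To globalize, decompose $X = N \cup_L (X - \st(v))$. Since $A \subset N$ and $A \cap L = \emptyset$, this gives $X - \{v\} = (N - \{v\}) \cup_L (X - \st(v))$ and $X - A = (N - A) \cup_L (X - \st(v))$. All inclusions of $L$ here are inclusions of subcomplexes of $\Delta'$, hence cofibrations, so the gluing lemma upgrades the local homotopy equivalence to the desired $X - A \simeq X - \{v\}$. The main technical point is the cone-coordinate description of $A$ together with the bound $r < 1$; this is precisely where the hypothesis that $v$ is a genuine vertex of $\Delta$ is used, since it forces each other vertex of $A$ to lie in the interior of some $\Delta$-simplex containing $v$ and hence strictly below the top of the cone $N$.
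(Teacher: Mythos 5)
Your proof is correct and is essentially the paper's argument: both push points radially away from $v$ onto $X-\st(v)$ and observe that the same retraction restricts to $X-A$; you merely package the radial homotopy in cone coordinates on the closed star and then glue along the link, and in doing so you actually verify the key point (that the radial expansion preserves the complement of $A$, via $A=\{(l,t):l\in A'',\,t\le r(l)\}$ with $r<1$) that the paper leaves implicit. One small slip: $N-\{v\}$ and $N-A$ are not subcomplexes of $\Delta'$, so the cofibrancy of $L\hookrightarrow N-A$ should be read off from your cone-coordinate description rather than from being a subcomplex inclusion (or the gluing lemma can be avoided entirely by extending the radial homotopy by the identity on $X-\st(v)$, which is exactly what the paper does).
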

\begin{proof}
In the complex given by $\Delta$ we can construct the deformation retract
of $X - \{v \}$ to $X - \st (v)$  (where $\st (v)$ is defined using
the simplicial complex $\Delta$)
\[
H: ( X- \{v \})\times I \rightarrow X- \{v \}.
\]
explicitly by using barycentric coordinates in each simplex of $\Delta$.

Specifically,  if $\sigma$ is a simplex of $\Delta$ not containing $v$,
then $H (p,t) = p$ for $p \in \sigma$.  

If $\sigma$ does contain $v$, let the vertices of $\sigma$ be
$v=v_{0},v_{1},\dotsc ,v_{k}$. Then a typical point of 
$\sigma -\{v\}$ is given by 
$sv_{0} + (1-s) \sum_{i=1}^{k}a_{i}v_{i}$ where
$\sum_{i=1}^{k}a_{i} = 1$.  Then 
\[
H (sv_{0} + (1-s) \sum_{i=1}^{k}a_{i}v_{i},t) = (1-t) sv_{0} +
(1-s+ts) \sum_{i=1}^{k}a_{i}v_{i}
\]

Applying this homotopy to $X - A$ gives a deformation retract to $X -
\st (v)$.  So $X- \{v \} \simeq X- A$. 
\end{proof}

\section{Singularities detected by local homology}\label{local-vanishing}

\subsection{The singular sets $S_{n}$ are composed of cells of
dimension less than or equal to $n$.}

We continue to assume that $X$ is a finite regular cell
complex of dimension $d$. Throughout this section we assume further that $X$ is pure.  Recall that we refer to $H_*(X,X-x)$ as the local homology at $x$.  Since $X$ is locally contractible we can choose a contractible neighborhood of $x$, say $U$. Then by excision we have 
\[
H_{*} (X,X-\{x \}) \cong  H_{*} (U,U-\{x \}) \cong \tilde{H}_{*-1} (U-\{x \}).
\]

From this we see that any $x$ in the interior of a $d$-cell of $X$ has
local homology $H_*(X,X-x) \cong \tilde H(S^d)$ and $x\in S_{d-1}$.
Similarly, if $x$ is on the boundary of exactly one $d$-cell then
$H_*(X,X-x) = 0$ and $x$ is none of the sets $S_k$.  So if we think of
$X$ as a singular manifold with boundary, the point with neighborhoods
homeomorphic to $\mathbf{R}^{d}$ or the corresponding half-space are
not in $S_{k}$ when $k<d-1$.  The sets
$S_i$, $0\le i \le d-2$ form a stratification of those singularities
of $X$ that are detected by local homology.

Of course it is also possible for $X$ to be topologically singular and
still have
local homology zero in dimensions below $d$ at every point.  A simple but
illustrative example (for $d=1$) is the space 
\[
( [0,1]\times \{a,b,c \})/ \{(0,a)  \sim (0,b) \sim (0,c)  \}.
\]
This is three copies of the unit interval identified at one end
point.  The identification point is a singular point and has no local
homology below dimension $1$.  This singularity is still
detected by local homology of course, but not until dimension $1$.

As is well known, there are also spaces with singularities so that all the
local homology groups are those of a manifold.  A standard source of
examples is the suspension of any homology sphere which isn't actually
a sphere.

We begin by showing that the sets $S_n$ put restrictions on the cell
structure of $X$.  Recall first (Definition~\ref{singular-n}) that
$S_{n}$ does not depend on the cellular structure of $X.$
Nevertheless, we have the following.

\begin{lemma}\label{singular-set-union-of-cells}
The set $S_{n}$ is a union of open cells (in any cell structure on $X$).
\end{lemma}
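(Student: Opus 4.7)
The plan is to show that for any open cell $\sigma$ of the chosen cell structure, all points of $\sigma$ have isomorphic local homology groups, so that either $\sigma \subseteq S_n$ or $\sigma \cap S_n = \emptyset$.

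First I would fix an arbitrary (finite, regular) cell structure on $X$ and pick an open cell $\sigma$. For a point $x\in\sigma$, the second part of Lemma~\ref{def-retract} (applied in this cell structure) furnishes a strong deformation retract of $X-\{x\}$ onto $X-\st(\sigma)$. In particular, the inclusion of pairs $(X, X-\st(\sigma))\hookrightarrow (X, X-\{x\})$ is a homotopy equivalence of pairs, so
\[
H_{*}(X, X-\{x\};R) \cong H_{*}(X, X-\st(\sigma);R).
\]
The right hand side is manifestly independent of the choice of $x\in\sigma$, so the local homology at every point of $\sigma$ is the same. Consequently the condition defining membership in $S_n$ (namely, vanishing of $H_i(X,X-\{x\};R)$ for $i\le n$ and non-vanishing for $i=n+1$) holds for all $x\in\sigma$ or for none. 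Hence $\sigma$ is either entirely contained in $S_n$ or entirely disjoint from it, and $S_n$ is the union of those open cells on which the condition is satisfied.

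The only step requiring any care is verifying that the deformation retract provided by Lemma~\ref{def-retract} really gives an isomorphism of the local homology groups at any two points of $\sigma$; but since the retract identifies $X-\{x\}$ with the fixed space $X-\st(\sigma)$ for every such $x$, this is immediate. There is no real obstacle: the content of the lemma is that the purely topological invariant $S_n$ respects any cell decomposition in which the local deformation retract of Lemma~\ref{def-retract} is available.
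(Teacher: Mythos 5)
Your proof is correct and follows essentially the same route as the paper: both arguments rest on the strong deformation retract $X-\{x\}\to X-\st(\sigma(x))$ from Lemma~\ref{def-retract}, which identifies the local homology at every point of an open cell with a group independent of the point. The only cosmetic difference is that the paper passes through the intermediate space $X-D$ (via Corollary~\ref{main-retract}) while you compare directly to $X-\st(\sigma)$; the content is identical.
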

\begin{proof}
If $x$ is in some open cell
$D$ then $X-D \simeq X- \{x \}$ by 
Lemma~\ref{def-retract} together with Corollary~\ref{main-retract}.
So applying the same argument to $x' \in D$ and using the appropriate
long exact sequences,
\[
H_{*}(X, X-\{x \}) \cong H_{*}(X,X-D) \cong H_{*}(X,X-\{x' \})
\]
\end{proof}

\begin{lemma}\label{singularity-level}
If $x \in S_{n}$ for $n<d-1$ then $x$ must be in the interior of a cell of
dimension $n$ or lower.
\end{lemma}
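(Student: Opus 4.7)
The approach is by contradiction. Suppose $x \in S_n$ lies in the interior of an open cell $\sigma$ of dimension $k$ with $k > n$. Because $n < d-1$, one has $n + 1 \le k$, so it suffices to show $H_i(X, X-\{x\}; R) = 0$ for all $i \le k$, which contradicts $H_{n+1}(X, X-\{x\}; R) \ne 0$. First I would dispose of the boundary case $k = d$: then $x$ has a Euclidean neighborhood, the local homology is concentrated in degree $d$, and $x$ belongs only to $S_{d-1}$, which is disjoint from $S_n$ for $n < d-1$.

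For the main case $n < k < d$, my plan is to replace $(X, X-\{x\})$ by a CW pair and read off the answer from cellular homology. Lemma~\ref{def-retract} supplies $X - \{x\} \simeq X - \st(\sigma)$, hence $H_*(X, X-\{x\}) \cong H_*(X, X - \st(\sigma))$. The crucial observation is that $X - \st(\sigma)$ is a subcomplex: if $\tau \in \st(\sigma)$ and $\tau \subseteq \bar{\mu}$, then $\sigma \subseteq \bar{\tau} \subseteq \bar{\mu}$ forces $\mu \in \st(\sigma)$, so $\st(\sigma)$ is upward-closed in the face poset and its complement is a union of closed cells. I can then pass to $\tilde H_*\bigl(X/(X-\st(\sigma))\bigr)$. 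The non-basepoint cells of this quotient are precisely the cells of $\st(\sigma)$: the single $k$-cell $\sigma$ together with cells of dimension strictly greater than $k$. Consequently there are no cells in dimensions $1,\ldots,k-1$, giving $\tilde H_i = 0$ for $0 < i < k$; the quotient is also path connected (since $k \ge 1$ and $\partial\sigma \subseteq X - \st(\sigma)$ collapses to the basepoint), so $\tilde H_0 = 0$.

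It remains to compute $\tilde H_k$ of the quotient. In its reduced cellular chain complex, $C_k = R\sigma$ and $C_{k-1} = 0$, while for each $(k+1)$-cell $\tau$ with $\sigma \subseteq \bar{\tau}$ the only $k$-face of $\tau$ lying in $\st(\sigma)$ is $\sigma$ itself, so $\partial(\tau) = [\tau:\sigma]\sigma = \pm\sigma$ by regularity. The main point requiring care, and where I would concentrate the argument, is ensuring that at least one such $(k+1)$-cell actually exists, so that $\partial$ surjects onto $C_k$. For this I would combine purity of $X$ (putting $\sigma$ in the closure of some $d$-cell) with the standard rank-completeness of closed intervals in the face poset of a regular CW complex, which produces cells of each intermediate dimension between $\sigma$ and that $d$-cell. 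Once this $(k+1)$-cell is in hand, $\tilde H_k = 0$, hence $H_i(X, X-\{x\}) = 0$ for all $i \le k$, and the contradiction is complete.
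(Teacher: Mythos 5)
Your proof is correct, and it takes a genuinely different route from the paper's, though both pivot on the same two inputs: Lemma~\ref{def-retract} (to replace $X-\{x\}$ by $X-\st(\sigma)$) and purity (to guarantee a $(k+1)$-cell having $\sigma$ as a face). The paper passes to the $(k+1)$-skeleton, uses cellular approximation on a commutative square to see that $H_i(X^{(k+1)},X^{(k+1)}-\{x\})\to H_i(X,X-\{x\})$ is an isomorphism for $i\le k$, excises down to the open star $U=\st^{(k+1)}(\sigma)$, and identifies $U-\{x\}$ geometrically as a wedge of $k$-spheres (one fewer than the number of $(k+1)$-cells over $\sigma$). You instead observe that $\st(\sigma)$ is upward-closed in the face poset, so $X-\st(\sigma)$ is a subcomplex, and read the vanishing directly off the relative cellular chain complex: no cells in degrees $1,\dots,k-1$, and the single $k$-cell $\sigma$ is hit by $\partial\tau=\pm\sigma$ for any $(k+1)$-cell $\tau$ over $\sigma$ (the only $k$-face of $\tau$ in $\st(\sigma)$ being $\sigma$ itself). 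Your version avoids the skeleton reduction and the homotopy-type identification of $U-\{x\}$ at the cost of invoking gradedness of intervals in the face poset of a regular CW complex to produce the $(k+1)$-cell; the paper needs that same existence statement but leaves it implicit in the parenthetical count of $(k+1)$-cells, so you are right to flag it as the place where purity actually enters, and it is a standard fact (e.g., it follows from Bj\"orner's characterization of face posets of regular CW complexes, or by inducting on the pure regular CW decomposition of $\partial\delta\cong S^{d-1}$ for a $d$-cell $\delta$ with $\sigma\subseteq\bar\delta$). Your handling of the boundary case $k=d$ is also fine and matches the discussion at the start of Section~\ref{local-vanishing}.
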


Note that this fact depends on $X$ being pure.  For example, If we take $X$ to be the union of a two cell
and a one cell at a vertex, then points in the interior of the one
cell will be in $S_{0}$.   Geometrically, $x \in S_{n}$ says that if
we take a contractible neighborhood of $x$ and remove $x$ from that
neighborhood then the resulting set is no longer $n$-connected.

\begin{proof}
Recall $st^{k}(\sigma)$ is the union of $\sigma $ and the open cells of dimension
$k$ and lower which are contained in $\st (\sigma)$.  This is the same
as $\st
(\sigma)$ within the space $X^{(k)}$ if $\sigma$ is a cell of
dimension less than or equal to $k$.

Suppose $x$ is in the interior of a cell of dimension $k<d$. We have a
commutative square of spaces
\[
\CD
X^{(k+1)}- \{x \} @>>> X^{(k+1)}-\st^{(k+1)} (\sigma (x))\\
@VVV @VVV\\
X - \{x \} @>>> X - \st (\sigma (x)).
\endCD
\]
The horizontal maps are homotopy equivalences by
Lemma~\ref{def-retract}.  The spaces on the right are subcomplexes of
$X$ and the right hand vertical map is inclusion of
the  $k+1$-skeleton.  So by cellular approximation, all maps induce
isomorphisms in $H_{i}$ for $i<k+1$.

From the long exact sequence of a pair, it follows that 
\[
H_{i} (X^{(k+1)},X^{(k+1)}- \{x \}) \rightarrow H_{i} (X, X-\{x \})
\]
is an isomorphism for $i\leq k$.  Now let $U = \st^{(k+1)} (\sigma
(x))$, which is an open neighborhood of $x$ in $X^{(k+1)}$. $U$
consists of the open $k$-cell containing $x$ and any open $k+1$ cells
which have that $k$-cell as a face.  So $U$ looks like a finite
collection of $k+1$-cells identified along part of their boundary, and
$x$ is in that part of the common boundary.  It follows that $U-\{x
\}$ is homotopy equivalent to a wedge of $k$-spheres (one fewer than
the number of $k+1$-cells attached to $\sigma (x)$).

So 
\[
H_{i} (X^{(k+1)},X^{(k+1)}- \{x \}) = H_{i} (U,U-\{x \})
\]
is $0$ for $i<k+1$ (and is free abelian on one fewer generator
than the number of $k+1$-cells attached to $\sigma (x)$ for $i=k+1$).

It follows that $x$ is not in $S_{n}$ for $n<k$.
\end{proof}

See the appendix for examples where $S_{n}$ contains the interiors of
cells of dimension strictly smaller than $n$.

\subsection{The implications of connectivity by codimension one faces}

We have already assumed the global topological condition: $X$ is pure. Our final goal is to understand the effect of the extra global topological condition: connected by codimension one faces.  Under that condition we can prove a remarkable strengthening of Lemma \ref{singularity-level} (see Proposition \ref{singularity-is-very-singular} and its Corollary). We require one technical lemma.



\begin{lemma}\label{boundary-subset-lemma} Let $X$ be a pure regular
cell complex of dimension $d$.  Let $n<d$.  Suppose $S_{0} = \dotsb =
S_{n-1} = \emptyset,$ $\tilde{H}_{k} (X) = 0$ for $k<d$, and $D$ is an open
$n$-cell of $S_{n}$ with $S_{n} \cap \partial D = \emptyset$. Let $Y=X-D$. 

Let $A \subseteq \partial D$ be a subspace homeomorphic to $D^{i}$
(the closed $i$ disk) and
also a subcomplex of $\partial D$ under some cell structure on
$\partial D$ which subdivides the given cell structure.

Then $\tilde{H}_{j}(Y-A) = 0$ for $j<n+1-i$.
\end{lemma}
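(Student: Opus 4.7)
The plan is to induct on $i$, the dimension of the disk $A$.

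\textit{Base case} ($i=0$): Here $A = \{a\}$ is a single point of $\partial D$. Because $S_0, \ldots, S_{n-1}$ are empty and $S_n \cap \partial D = \emptyset$, the point $a$ lies in none of $S_0, \ldots, S_n$, so $H_k(X, X-\{a\}) = 0$ for $k \le n+1$. Combined with $\tilde H_k(X) = 0$ for $k < d$, the long exact sequence of the pair $(X, X-\{a\})$ forces $\tilde H_j(X-\{a\}) = 0$ for $j \le n$. To pass from $X-\{a\}$ to $Y-\{a\}$, one uses that $\overline D$ is a closed $n$-ball with $a$ on its boundary, so $\overline D - \{a\}$ deformation retracts onto $\partial D - \{a\}$; extending by the identity via the homotopy extension property for the CW pair $(\overline D, \partial D)$ produces a deformation retraction $X-\{a\} \to Y-\{a\} = X - D - \{a\}$, which establishes the case $i=0$.

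\textit{Inductive step} ($i \ge 1$): The key first move is a Mayer--Vietoris reduction. Write
\[
X-A = (Y-A) \cup (\overline D - A), \qquad (Y-A) \cap (\overline D - A) = \partial D - A.
\]
Both $\overline D - A$ and $\partial D - A$ are contractible: the cone structure on the closed ball $\overline D$ (with apex inside $D$) retracts $\overline D - A$ onto the apex, since $A \subset \partial D$, and the complement of the PL-embedded disk $A$ in the sphere $\partial D \cong S^{n-1}$ has vanishing reduced homology by Alexander duality and is simply connected in the PL category, hence contractible. After thickening the two closed pieces to open neighborhoods giving an excisive cover, the Mayer--Vietoris sequence yields $\tilde H_j(Y-A) \cong \tilde H_j(X-A)$ for every $j$. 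Combining with the long exact sequence of the pair $(X, X-A)$ and the hypothesis $\tilde H_k(X) = 0$ for $k < d$, the desired conclusion $\tilde H_j(Y-A) = 0$ for $j < n+1-i$ is equivalent to the vanishing $H_k(X, X-A) = 0$ for $k \le n+1-i$.

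To prove this last vanishing I would run a secondary induction on the number of cells of $A$, exploiting the collapsibility of the PL $i$-disk $A$. At each step one removes a free pair $(\tau, \rho)$ from $A$, producing a collapsible subcomplex $A'$ with fewer cells; the long exact sequence of the triple $(X, X-A', X-A)$ combined with excision reduces the problem to the local contribution from $\tau \cup \rho$. The main obstacle is controlling this local contribution: a detailed understanding of the link of $\tau \cup \rho$ in $X$ is required, which generally involves cells of dimension greater than $n$. This analysis must combine the local non-singularity hypotheses ($S_{<n} = \emptyset$ and $S_n \cap \partial D = \emptyset$), which govern pointwise local homology along $\partial D$, with the global vanishing $\tilde H_k(X) = 0$ for $k < d$, which determines how local data assembles along the higher-dimensional subcomplex $\tau \cup \rho$.
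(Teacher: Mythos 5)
Your proposal does not complete the proof: the step you yourself flag as ``the main obstacle'' is precisely the content of the lemma. Your reduction of $\tilde{H}_{j}(Y-A)$ to $H_{k}(X,X-A)$ (via the Mayer--Vietoris decomposition $X-A=(Y-A)\cup(\overline{D}-A)$ and the long exact sequence of $(X,X-A)$) is a legitimate and correct first move, granting that what you actually need and actually have for $\partial D-A$ is acyclicity via Alexander duality rather than contractibility --- a PL-embedded disk of codimension $2$ in a sphere can be knotted, so the simple-connectivity claim is false in general. But after that reduction, everything rests on showing $H_{k}(X,X-A)=0$ for $k\leq n+1-i$, i.e.\ on converting the pointwise hypothesis ``$H_{k}(X,X-\{x\})=0$ for $k\leq n+1$ at every $x\in\partial D$'' into a statement about the $i$-dimensional subcomplex $A$ with a loss of exactly $i$ degrees. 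You give no mechanism for producing that degree shift: the proposed collapse induction names no source for losing one degree per unit of $\dim A$, and it also relies on collapsibility of an arbitrary triangulation of $D^{i}$, which can fail. (A separate, smaller problem: in your base case, extending the deformation retraction of $\overline{D}-\{a\}$ onto $\partial D-\{a\}$ ``by the identity'' is not continuous, because $D$ is not open in $X$ --- $X$ is pure of dimension $d>n$, so points of $D$ are limits of points outside $\overline{D}$ that your homotopy fixes. The correct route, as in Corollary~\ref{point-retract}, is to retract $X-\{a\}$ onto $X-\st(\sigma(a))$ and check that this homotopy preserves $X-(D\cup\{a\})$.)

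For comparison, the paper supplies the missing mechanism by a double induction that never leaves the spaces $Y-A$. First, when $A$ is the closure of a single top cell of a barycentric subdivision, Proposition~\ref{subdivision-retract} gives $Y-A\simeq Y-\{v\}$ for the vertex $v$ that $A$ shares with the undivided simplex, so a single cell costs no degrees at all: $\tilde{H}_{j}(Y-A)=0$ for $j<n+1$. Then, for $A$ with $r+1$ cells, one writes $A=A'\cup A''$ with $A'\cap A''\cong D^{i-1}$ and runs Mayer--Vietoris on $Y-(A'\cap A'')=(Y-A')\cup(Y-A'')$; the degree loss of one per dimension enters exactly here, because the inductive hypothesis for the $(i-1)$-disk $A'\cap A''$ is invoked one homological degree higher than the conclusion for $A$. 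If you want to salvage your route through $H_{*}(X,X-A)$, you would need an analogous induction (or a local-homology spectral sequence argument) that actually produces the bound $k\leq n+1-i$; as written, the proposal stops before the lemma is proved.
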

\begin{proof}
The proof is by a double induction with the outer induction on $i$ and
the inner induction on the number of $i$-cells in $A$,
which we'll denote by $r$.

Let $i$ be $0$.  Note that $r=1$ by our hypotheses
that $A \cong D^{0}.$ Then by Corollary~\ref{point-retract}, $Y-A
= X- (A\cup D)  \simeq
X-A$.  Since $A$ is a single point in $\partial D$, and by
hypothesis that point isn't in $S_{0}\cup \dotsb \cup S_{n}$,  we have
$\tilde{H}_{j}(X-A) = 0$ for $j<n+1$ as desired.

Now suppose the lemma is established for $i-1 \geq 0$.  Consider first
the following special case.  Subdivide the cell complex structure on
$\partial D$ so that it is a simplicial complex.  Then take the first
barycentric subdivision of that simplicial complex.  Let $A$ be the
closure of an $i$-cell in that complex,  so $A \cong D^{i}.$

Let $v$ be the vertex that $A$ shares with the $i$-simplex (before
subdivision) that $A$ is part of.
By Proposition~\ref{subdivision-retract}, $Y-A \simeq Y- \{v \}$
which is in turn homotopy equivalent to $X - \{v \}$ by the previous
case.
So $\tilde{H}_{j}(Y-A) = 0$ for $j<n+1$.

Now let $A$ be as in the hypotheses, with the additional assumption
that it is a subcomplex of a barcyentric subdivision of a simplicial
subdivision of $\partial D$, as in the special case above.  Suppose
$A$ has $r+1$ cells, and that the lemma is true in the case of $r$
cells.  Write $A = A' \cup A''$ where $A'$ is a single cell, and $A''$
has $r$-cells, and $A' \cap A''$ is homeomorphic to $D^{i-1}$.

We look at the Mayer-Vietoris sequence for 
\[
Y- (A' \cap  A'') = (Y-A') \cup (Y-A'')
\]
which gives 
\begin{multline*}
H_{j+1} (Y-A')\oplus H_{j+1} (Y-A'') \rightarrow H_{j+1} (Y- (A'\cap
A'')) \rightarrow H_{j} (Y-A)\\
 \rightarrow H_{j} (Y-A') \oplus H_{j} (Y-A'').
\end{multline*}
By our two inductive hypotheses,  (on $i-1$ and $r$), $H_{j+1} (Y-
(A'\cap A'')) = 0$ for $j+1<n+1- (i-1)$ (or $j<n+1-i$) and $H_{j}
(Y-A'') = H_{j} (Y-A') = 0$ for $j<n+1-i$.

It follows that $H_{j} (Y-A) = 0$ for $j<n+1-i$ as we want.  By
induction on $r$ this holds for any $A$ which is an appropriate
subcomplex of our subdivision (of the cell structure on $\partial D$).

Finally, if $A \subseteq \partial D$ is any appropriate subcomplex of
a subdivision of $\partial D$ so that $A \cong D^{i}$, then $A$ is also an
appropriate subcomplex of a finer subdivision of $\partial D$ which is
itself a barycentric subdivision of a simplicial complex.  So our
special case covers this subcomplex $A$ of $\partial D$.
\end{proof}

\begin{proposition}\label{singularity-is-very-singular}
Let $X$ be a complex as above.  In addition assume that $\tilde{H}_{i} (X) =
0$ for $i<d$, and that $X$ is connected through codimension one
faces.  If there is an $n<d-1$ so that $S_{n} \ne \emptyset$, then
there is some point in some such $S_{n}$ which is in an open cell of
dimension smaller than $n$.
\end{proposition}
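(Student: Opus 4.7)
The plan is a proof by contradiction.  Suppose that for every $n<d-1$ with $S_n\neq\emptyset$ no point of $S_n$ lies in an open cell of dimension strictly smaller than $n$; combined with Lemma \ref{singularity-level} and Lemma \ref{singular-set-union-of-cells}, each such $S_n$ is then a union of $n$-cells only.  Let $m$ be the minimum $n$ with $S_n\neq\emptyset$, choose an $m$-cell $D\in S_m$, and set $Y=X-D$.  All the hypotheses of Lemma \ref{boundary-subset-lemma} are then in force for this $D$ at level $n=m$.  Using Corollary \ref{main-retract} to identify $Y\simeq X-\{x\}$ for $x\in D$, together with $\tilde H_i(X)=0$ for $i<d$, the long exact sequence of $(X,X-\{x\})$ shows $\tilde H_m(Y)\neq 0$ and $\tilde H_j(Y)=0$ for $j<m$.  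The contradiction will come from showing instead that $\tilde H_m(Y)=0$.

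Fix a vertex $v$ of $\partial D$.  Lemma \ref{boundary-subset-lemma} applied at $i=0$ with $A=\{v\}$ gives $\tilde H_j(Y-\{v\})=0$ for $j\leq m$, so the long exact sequence of the pair $(Y,Y-\{v\})$ collapses to an isomorphism $\tilde H_m(Y)\cong H_m(Y,Y-\{v\})$.  By excision the right-hand side is controlled by the local structure at $v$: using the cone description $\overline{\st_X(v)}=v*lk_X(v)$ in a simplicial subdivision, removing $D$ near $v$ corresponds to deleting the open $(m-1)$-cell $(D-v)^\circ$ from $lk_X(v)$, which identifies $H_m(Y,Y-\{v\})$ with $\tilde H_{m-1}(lk_X(v)-(D-v)^\circ)$.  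The contradiction hypothesis forces every vertex $w$ of $X$ to lie in no $S_n$ with $n<d-1$; combined with $\tilde H_i(X)=0$ for $i<d$ this yields $\tilde H_i(lk_X(w))=0$ for all $i<d-1$.  A Mayer--Vietoris decomposition of $lk_X(v)$ as the closed star of $D-v$ together with its complement, whose overlap is $\partial(D-v)*lk_X(D)\cong\Sigma^{m-1}lk_X(D)$, then reduces everything to $\tilde H_m(Y)\cong\tilde H_0(lk_X(D))$.

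What remains is to show that $lk_X(D)$ is path connected, and this is where I expect the principal obstacle.  Purity and $\tilde H_i(X)=0$ alone do not force connectedness of the link of a codimension-$\geq 2$ cell, so the hypothesis that $X$ is connected through codimension one faces must be used essentially.  If $lk_X(D)$ had two components then the $d$-cells of $X$ containing $D$ would split into two groups whose closed unions meet only along $\bar D$, and any chain of $d$-cells realising the codimension-one connectivity between the two groups must leave the star of $D$ and cross $(d-1)$-faces that omit a vertex of $D$.  The plan for finishing is to combine this combinatorial observation with the vanishing results for vertex links established above, together with iterated applications of Corollary \ref{point-retract} and Mayer--Vietoris, to show that such a disconnection of $lk_X(D)$ would be incompatible with $\tilde H_i(X)=0$ in the given range.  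Everything before this last step is essentially a routine chase through excision, the LES, and the join description of the closed star; organising the final combinatorial-topological argument is the technical heart of the proof.
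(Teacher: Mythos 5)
Your setup matches the paper's: reduce to a minimal $m$ with $S_m\neq\emptyset$, observe via Lemmas \ref{singular-set-union-of-cells} and \ref{singularity-level} that $S_m$ would consist only of open $m$-cells, pick such a cell $D$, set $Y=X-D$, and use $\tilde H_i(X)=0$ for $i<d$ to get $H_m(Y)\neq 0$; the contradiction is to come from showing $H_m(Y)=0$. But your route to that vanishing has a genuine gap, and you have located it yourself: everything is funneled into the claim that $lk_X(D)$ is path connected, and for that claim you offer only a plan (``combine this combinatorial observation with the vanishing results \dots to show that such a disconnection of $lk_X(D)$ would be incompatible with $\tilde H_i(X)=0$''), not an argument. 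That is not a routine detail --- it is the entire content of the proposition. Connectivity of the link of a codimension-$\geq 2$ cell is a genuinely local statement, and the hypothesis ``connected through codimension one faces'' is a global one; bridging that gap is exactly what requires work, and nothing in your sketch does it. (There is also a smaller soft spot: your assertion that the contradiction hypothesis forces every vertex to lie in no $S_n$ with $n<d-1$ is vacuous for $n=0$, so you need minimality of $m$ to rule out $S_0$, and the case $m=0$ itself does not fit the link-of-a-vertex reduction at all.)

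The paper's proof avoids the link entirely, and that is the idea you are missing. It decomposes $\partial D\cong S^{m-1}$ into nested pairs of hemispheres $A^i\cup B^i\cong S^i$ with $A^i\cap B^i=A^{i-1}\cup B^{i-1}$, and applies Mayer--Vietoris to $Y-(A^{i-1}\cup B^{i-1})=(Y-A^i)\cup(Y-B^i)$ at each stage. Lemma \ref{boundary-subset-lemma} --- used in \emph{all} dimensions $i\leq m-1$, not just $i=0$ as in your proposal --- supplies the vanishing $\tilde H_j(Y-A^i)=0$ for $j<m+1-i$ that collapses each Mayer--Vietoris sequence to an isomorphism, yielding $H_m(Y)\cong\tilde H_0\bigl(Y-(A^{m-1}\cup B^{m-1})\bigr)=\tilde H_0(X-\overline D)$. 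Since $\dim\overline D\leq d-2$, the space $X-\overline D$ contains $X-X^{(d-2)}$, and connectivity through codimension one faces gives $\tilde H_0(X-\overline D)=0$ directly --- no statement about links is ever needed. If you want to salvage your approach, you would have to prove the link-connectivity statement from scratch, and the natural way to do so is essentially to rediscover this hemisphere induction; as written, your proof is incomplete at its decisive step.
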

\begin{proof}
Let $n$ be minimal so that $S_{n} \ne \emptyset$.  If there is no such
$n$, or if $n\geq d-1$, we're done.  So assume $n<d-1$.  By Lemmas~\ref{singular-set-union-of-cells} and \ref{singularity-level} $S_n$ must contain an open cell $D$ of dimension at most $n$.  If $D$ has
dimension less than $n$, then we are done.  So assume $D$ has dimension $n$.
Let $Y = X-D$. From the hypothesis $\tilde H_k(X) = 0$ for $k<d$ we get
$H_{n}(Y) = H_{n+1}(X,Y) \ne 0$. 

We wish to prove that $S_{n}\cap \partial D \ne \emptyset.$
Choose a sequence of subsets $A^{i}$, $B^{i}$, $i = 0,\dotsc ,n-1$
subcomplexes of $\partial D$ (or of some subdivision) so that 
\begin{enumerate}
\item $A^{n-1} \cup B^{n-1} = \partial D \cong S^{n-1}$
\item $A^{i} \cup B^{i} \cong S^{i}$
\item $A^{i} \cap B^{i} = A^{i-1}\cup B^{i-1}$.
\end{enumerate}
Notice that $A^0$ and $B^0$ are distinct singleton sets.

Assume $S_{n} \cap \partial D = \emptyset$.  Consider 
\begin{equation}\label{level-zero}
Y = ( Y - A^{0}) \cup (Y-B^{0}).
\end{equation}
The space $Y- A^{0} \simeq X- A^{0}$ by Corollary~\ref{point-retract}, so
since the point of $A^{0}$ is not in $S_{0} \cup \dotsb \cup S_{n}$,
we get $\tilde{H}_{j} (Y-A^{0}) = 0$ for $j\leq n$, and of course the
same result for $\tilde{H}_{J} (Y-B^{0})$.

Then in the Mayer-Vietoris sequence for (\ref{level-zero}) we get 
\[
H_{n} (Y) \cong  H_{n-1} (Y- (A^{0}\cup B^{0})).
\]

We do a similar analysis for 
\begin{equation}\label{level-one}
Y- (A^{0}\cup B^{0}) = ( Y-A^{1}) \cup (Y-B^{1}).
\end{equation}
We have $\tilde{H}_{j} (Y-A^{1}) = 0$ for $j<n+1-1 = n$ by
Lemma~\ref{boundary-subset-lemma}.  

Then in the Mayer-Vietoris sequence for (\ref{level-one}) we get 
\[
H_{n-1} (Y- (A^{0}\cup B^{0})) \cong  H_{n-2} (Y- (A^{1}\cup B^{1})).
\]

Similarly we have 
\begin{equation}\label{level-k}
Y- (A^{k-1}\cup B^{k-1}) = (Y-A^{k})\cup (Y-B^{k}),
\end{equation}
Lemma~\ref{boundary-subset-lemma} tells us that $\tilde{H}_{j}
(Y-A^{k}) = 0$ for $j<n+1-k$  (and a similar result for $B^{k}$).

So by the Mayer-Vietoris sequence for (\ref{level-k}) we get 
\[
\tilde{H}_{n-k} (Y- (A^{k-1}\cup B^{k-1})) \cong  \tilde{H}_{n-k-1} (Y-
(A_{k}\cup B_{k})).
\]

Assembling this information, we get 
\[
0 \ne H_{n} (Y) = \tilde{H}_{0} (Y- (A^{n-1}\cup B^{n-1})) =
\tilde{H}_{0} (X-\overline{D}).
\]
The hypothesis that $X$ is connected through codimension one faces
tells us that $\tilde{H}_{0} (X-\overline{D}) = 0$ unless (possibly)
$D$ has codimension $1$.
But $D$ was assumed to have dimension $n< d-1$, so we have a
contradiction to our assumption that $S_{n} \cap \partial D = \emptyset$.
\end{proof}

\begin{corollary}\label{non-singular}
Suppose $X$ is a pure regular cell complex of dimension $d$, connected
through codimension one faces, and with $\tilde{H}_{i} (X) = 0$ for
$i<d$.

Then if for each $0\leq i<d-1$, $S_{i}$ contains no cells of dimension
less than $i$, then for $0 \leq  i < d-1$, each $S_{i}$ is empty.  
\end{corollary}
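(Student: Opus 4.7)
The plan is to derive Corollary~\ref{non-singular} essentially as the contrapositive of Proposition~\ref{singularity-is-very-singular}. The hypotheses on $X$ in the corollary (pure, connected through codimension one faces, and $\tilde H_i(X)=0$ for $i<d$) are exactly those of the proposition, so the proposition applies without further work.

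First I would assume for contradiction that the conclusion of the corollary fails, i.e.\ that there exists some $n$ with $0 \le n < d-1$ for which $S_n \ne \emptyset$. Applying Proposition~\ref{singularity-is-very-singular} then produces an index $m < d-1$ together with a point $x \in S_m$ that lies in an open cell $\sigma$ of dimension strictly less than $m$. Second, I would invoke Lemma~\ref{singular-set-union-of-cells}, which tells us that each set $S_m$ is a union of open cells; in particular, since $x \in \sigma \cap S_m$, the entire open cell $\sigma$ is contained in $S_m$. This yields an open cell in $S_m$ of dimension less than $m$, directly contradicting the standing hypothesis that for each $0 \le i < d-1$ the set $S_i$ contains no cell of dimension less than $i$.

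I do not anticipate any real obstacle: the genuine content of the argument is already packaged in Proposition~\ref{singularity-is-very-singular}, and the corollary is just a logical rearrangement. The only small point to check is that the index $m$ produced by the proposition indeed satisfies $m < d-1$, which is built into the proposition's statement (the conclusion refers back to "some such $S_n$", meaning one of the nonempty $S_n$ with $n < d-1$), and that passing from a \emph{point} of $S_m$ in a low-dimensional cell to the \emph{whole cell} being in $S_m$ is legitimate, which is exactly what Lemma~\ref{singular-set-union-of-cells} supplies.
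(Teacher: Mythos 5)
Your argument is correct and is exactly the intended derivation: the paper states the corollary without proof precisely because it is the contrapositive of Proposition~\ref{singularity-is-very-singular}, upgraded from a point to a whole open cell via Lemma~\ref{singular-set-union-of-cells}. No gaps.
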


\appendix
\section{Examples of singularities}

As is clear from the above, $x \in S_{n}$ does not determine the
dimension of the open cell containing $x$.  For example, $S_{d-1}$
contains all open $d$ cells and all interior open $d-1$
cells. Similarly, the $3$-dimensional complex $X$ of
Example~\ref{s1-bad} below has points $x \in S_{1}$ so that $x$ is in
an open $1$-cell, and also at least one $x \in S_{1}$ which is in an
open $0$-cell.

Below we give examples of contractible cell complexes connected
through codimension one faces where (regardless of the chosen cell
structure) the singular set $S_{r}$ for some $r$ is composed of cells
of varying dimensions.

\begin{example}\label{s1-bad}
Let $T_{1}$ be a 3-simplex with vertices $\{v_{0},\dotsc ,v_{3} \}$
and $T_{2}$ be a 3-simplex with vertices $\{w_{0},\dotsc ,w_{3} \}$.
Define $X$ by 
\[
( T_{1} \sqcup T_{2})/\sim 
\]
where the relation $\sim$ is given by identifying the $2$-simplex
spanned by $\{v_{0},v_{1},v_{2} \}$ linearly (preserving the order of
the vertices) with that spanned by $\{w_{0},w_{1},w_{2}
\}$, and by identifying the $1$-simplex spanned by $\{v_{0},v_{3} \}$
with that spanned by $\{w_{0},w_{3} \}$ (again preserving
the order of simplices).  

If we just made the first identification we would have something
homeomorphic to a $3$-disk.  With both identifications, we have
something homotopy equivalent to a $3$-disk since it is a homotopy
equivalence to identify the contractible subcomplex consisting of the
closed 1-cell that is the image of the $1$-simplex spanned by
$\{v_{0},v_{3} \}$ to a point.

In this space, $S_{1}$ is the open 1-cell that is the image of the
open 1-simplex spanned by $\{v_{0},v_{3} \}$ (or equivalently by
$\{w_{0},w_{3} \}$) together with the image of $v_{0}$. The point that
is the image of $v_{0}$ will be a $0$-cell in any cell structure on
$X$, so that $S_{1}$ (in this example) will always have points
belonging to $0$-cells.
\end{example}

\begin{example}\label{s1-bad-4d} We can mimic Example~\ref{s1-bad} in
higher dimensions.  For example to create a space of dimension $4$
such that $S_{1}$ must necessarily contain both points of $1$-cells
and $0$-cells, we can define $X$ as follows.
Let $T_{1}$ be a 4-simplex with vertices $\{v_{0},\dotsc ,v_{4} \}$
and $T_{2}$ be a 4-simplex with vertices $\{w_{0},\dotsc ,w_{4} \}$.
\[
X = ( T_{1} \sqcup T_{2})/\sim 
\]
where the relation $\sim$ is given by identifying the $3$-simplex
spanned by $\{v_{0},v_{1},v_{2},v_{3} \}$ linearly (preserving the order of
the vertices) with that spanned by $\{w_{0},w_{1},w_{2},w_{3}
\}$, and by identifying the $1$-simplex spanned by $\{v_{0},v_{4} \}$
with that spanned by $\{w_{0},w_{4} \}$ (again preserving
the order of simplices).  

Then $S_{1}$ is analogous to the previous example;  it contains the
image of the $1$-simplex spanned by $\{v_{0},v_{4} \}$ together with
the image of $v_{0}$.  $S_{2} = \emptyset$.  As in the previous
example, the point that is the image of $v_{0}$ will be a $0$-cell in
any cell structure on $X$, and the rest of the points of $S_{1}$ will
be in $1$-cells or $0$-cells.
\end{example}

\begin{example}\label{s2-bad}
We can also creat a complicated $S_{2}$. 

Let $T_{1}$ be a 4-simplex with vertices $\{v_{0},\dotsc ,v_{4} \}$
and $T_{2}$ be a 4-simplex with vertices $\{w_{0},\dotsc ,w_{4} \}$.
\[
X = ( T_{1} \sqcup T_{2})/\sim 
\]
where the relation $\sim$ is given by identifying the $3$-simplex
spanned by $\{v_{0},v_{1},v_{2},v_{3} \}$ linearly (preserving the order of
the vertices) with that spanned by $\{w_{0},w_{1},w_{2},w_{3}
\}$, and by identifying the $2$-simplex spanned by $\{v_{0},v_{1},v_{4} \}$
with that spanned by $\{w_{0},w_{1},w_{4} \}$ (preserving
the order of simplices).  

Then $S_{1} = \emptyset$, but $S_{2}$ consisted of the open $2$-cell
that is the image of the simplex $\{v_{0},v_{1},v_{4} \}$ together
with the open $1$-cell that is the image of the simplex $\{v_{0},v_{1}
\}$.  This subset of $X$ will be a union of a non-zero number of open
$2$-cells and a non-zero number of open $1$-cells in any cell complex
on $X$.  
\end{example}

\begin{example}
It is also possible to create a complex $X$ of dimension 4 where
$S_{2}$ will be a union of a non-zero number of $2$-cells, a non-zero
number of $1$-cells and a non-zero number of $0$-cells for any cell
structure on $X$.
\begin{figure}[h!]
\includegraphics[scale=.45]{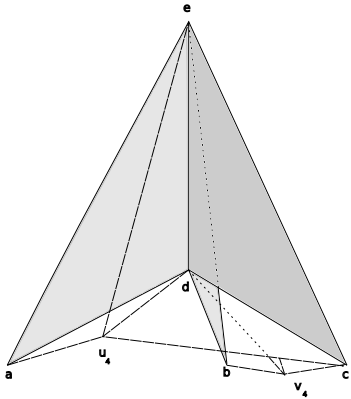}
\caption{$A$ with $\{u_{0},\dotsc ,u_{3} \}$ and $\{v_{0},\dotsc ,v_{3}
\}$ attached.}\label{subcomplex}
\end{figure}

We begin by creating the subcomplex most of which will become
$S_{2}$.  We will glue three $2$-simplices together along a common
edge.  So let $A$ be the simplicial complexes with vertices
$a,b,c,d,e$, 2-simplices $\{a,d,e \}, \{b,d,e \}, \{c,d,e \}$.
This determines the 1-simplices, and the 1-simplex common to the three
$2$-simplices is thus $\{d,e \}$.  This is illustrated by the three
shaded $2$-simplices in
Figure~\ref{subcomplex}. 

Next we attach three $4$-simplices to $A$ by attaching adjacent
$2$-faces (sharing a $1$-face) of each $4$-simplex to pairs of
$2$-simplexes in $A$.  Let $T_{1}$ be the 4-simplex with vertices
$\{u_{0},\dotsc ,u_{3} \}$, $T_{2}$ have vertices $\{v_{0},\dotsc
,v_{3} \}$ and $T_{3}$ have vertices $\{w_{0},\dotsc ,w_{3} \}$.  

We identify the 2-face spanned by $u_{0},u_{1},u_{2} $ with the
simplex $e,d,a $ and the 2-face spanned by $u_{0},u_{1},u_{3}$ with
the simplex $e,d,c $ (preserving the given order in both cases).

Similarly for $v_{0},v_{1},v_{2}$ with $e, d, c$ and then
$v_{0},v_{1},v_{3}$ with $e, d, b$.  And finally $w_{0},w_{1},w_{2}$
with $e, d, b$ and $w_{0},w_{1},w_{3}$ with $e, d, a$.  We sketch part
of this complex in Figure~\ref{subcomplex}, but note that we have
no realistic way to sketch the 4-simplices involved, so we are only
showing a (two dimensional sketch of a) three dimensional picture of
the space.

Of course this is contractible, and $S_{2}$ consists of the three open
$2$-simplices from $A$ together with the open $1$-simplex $e, d$.  But
this space is not connected through codimension one faces.  To fix
that we add a last 4-simplex with vertices $\{x_{0},\dotsc ,x_{4}\}$.
We identify the face with vertices $x_{0},\dotsc ,x_{3}$ with
$u_{1},u_{2},u_{3},u_{4}$, the face $x_{0}, x_{2}, x_{3}, x_{4}$ with
$v_{1}, v_{2}, v_{3}, v_{4}$, the face $x_{0}, x_{1}, x_{3}, x_{4}$
with $w_{1}, w_{3}, w_{2}, w_{4}$.

We'll call the resulting space $X$.  $X$ is now dimension 4, contractible and
connected through codimension one faces.  $S_{0} = S_{1} = \emptyset$
and $S_{2}$ is the union of the open $2$-cells of $A$ together with
the open $1$-cells $\{a,d \}, \{b,d \}, \{c, d \}$ and the $0$-cell
$\{d \}$.  In any cell structure on $X$, $\{d \}$ will be a zero cell,
and all but finitely many points of the open $1$-cells we just listed
will be in open $1$-cells, and of course almost all points in the
interiors of the $2$-cells listed will be in open $2$-cells.
\end{example}

\end{document}